\journal{Journal of \LaTeX\ Templates}
\theoremstyle{Corollary}
\newtheorem{cor}{Corollary}[section]
\theoremstyle{example}
\theoremstyle{lemma}
\newtheorem{lem}{Lemma}[section]
\theoremstyle{definition}
\newtheorem{defn}{Definition}[section]
\theoremstyle{theorem}
\newtheorem{rem}{Remark}[section]
\theoremstyle{theorem}
\newtheorem{thm}{Theorem}[section]
\theoremstyle{proposition}
\newtheorem{prop}{Proposition}[section]
\begin{document}

\begin{frontmatter}

\title{J-class  sequences of linear operators}

\author{M.R.  Azimi}
\address{Department of Mathematics, Faculty of Science, University of Maragheh, 55181-83111 Maragheh, Iran.}

\ead{mhr.azimi@maragheh.ac.ir}

\begin{abstract}
In this paper we first introduce  the extended limit set
$J_{\{T^n\}}(x)$ for a sequence of bounded linear operators
$\{T_n\}_{n=1}^{\infty}$ on a separable Banach space $X$ . Then we
study the dynamics of sequence of linear operators by using the
extended limit set. It is shown that the extended limit set is
strongly related to the topologically transitive of a sequence of
linear operators. Finally we show that a sequence of operators
$\{T_n\}_{n=1}^{\infty}\subseteq \mathcal{B}(X)$ is hypercyclic if
and only if there exists a cyclic vector $x\in X$ such that
$J_{\{T^n\}}(x)=X$.
\end{abstract}

\begin{keyword}
sequences of operators, hypercyclic operators, $J$-class operators,
\MSC[2010] Primary 47A16 Secondary 37B99
\end{keyword}

\end{frontmatter}

\section{Introduction and Preliminaries}\label{section1}
Let $X$ be a separable Banach space over the field of complex
numbers $\mathbb{C}$ and let $\mathcal{B}(X)$ be the algebra of all
bounded linear operators on $X$. An operator $T\in \mathcal{B}(X)$
is called \emph{hypercyclic} if there exists a vector $x\in X$ whose
orbit under $T$, $Orb(T, x)=\{T^n x: n\in \mathbb{N}\}$, is dense in
$X$. Such a vector is called \emph{hypercyclic vector} for $T$. An
operator $T\in  \mathcal{B}(X)$ is called \emph{topologically
transitive} if for any pair $U, V$ of nonempty open subsets of $X$,
there exits $n\in \mathbb{N}$ such that
$$T^n( U)\cap V\neq \emptyset.$$
Over last two decades many useful criteria and results have been
presented for the hypercyclicity of linear operators which may find
in \cite{MR2996748}, \cite{MR2533318}, \cite{MR2281650},
\cite{MR2720700} and the references therein.
  It is well known and easy to prove
that an operator $T\in \mathcal{B}(X)$ is hypercyclic if and only if
$T$ is topologically transitive (see \cite{MR2533318}).
\\In recent years the dynamics of linear operators has been widely
studied in terms of their orbits by many authors. The survey
articles \cite{MR2881536}, \cite{MR2015614}, \cite{MR2015614},
\cite{MR2720700}, \cite{MR2000019} and the book \cite{MR2533318} are
good references in this subject. In \cite{MR2881536} G. Costakis and
A. Manoussos have studied the dynamics of linear operators by making
use of extended limit sets instead of their orbits. Especially a
result due to P. Bourdon and N. Feldman (\cite{MR1986898}) has been
generalized by them as follows.
\begin{thm}(\cite{MR2881536}) If $x$ is a cyclic vector for an
operator $T\in \mathcal{B}(X)$ and the set $J_T(x)$ has non-empty
interior then $J_T(x)=X$ and so $T$ is hypercyclic.
\end{thm}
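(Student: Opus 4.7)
The plan is to proceed in two stages: first establish the equality $J_T(x)=X$, and then deduce hypercyclicity of $T$.

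I would begin by verifying two elementary structural properties of $J_T(x)$. Closedness follows from a diagonal extraction applied to the defining double limit. The inclusion $T(J_T(x)) \subseteq J_T(x)$ follows by shifting the indexing sequence $n_k \mapsto n_k+1$ in the definition: whenever $T^{n_k}x_k \to y$ with $x_k\to x$, continuity of $T$ yields $T^{n_k+1}x_k \to Ty$.

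The next step is a transfer lemma: for every polynomial $p \in \mathbb{C}[t]$ one has $p(T)\, J_T(x) \subseteq J_T(p(T)x)$. This is immediate from $p(T)T^{n_k}=T^{n_k}p(T)$ together with continuity of $p(T)$, applied to the defining sequence.

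To show $J_T(x)=X$, fix a ball $B(y_0, r) \subseteq J_T(x)$ together with sequences $x_k \to x$ and $n_k \to \infty$ realizing $T^{n_k}x_k \to y_0$, and let $z \in X$ be arbitrary. Using cyclicity of $x$, choose polynomials $p_m$ with $p_m(T)x \to z - y_0$. The goal is a perturbation $w_k = x_k + h_k$ with $h_k \to 0$ and $T^{n_k} h_k \to p_m(T)x$; this gives $w_k \to x$ and $T^{n_k} w_k \to y_0 + p_m(T)x$, so $y_0 + p_m(T)x \in J_T(x)$. Letting $m\to\infty$ and using closedness of $J_T(x)$ produces $z \in J_T(x)$.

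To pass from $J_T(x)=X$ to hypercyclicity, I would verify topological transitivity and invoke Birkhoff's theorem (valid since $X$ is a separable Banach space). For arbitrary nonempty open $U, V \subseteq X$, cyclicity produces a polynomial $p$ with $p(T)x \in U$; combining the transfer lemma with $J_T(x)=X$ locates an iterate of a small perturbation of $p(T)x$ inside $V$, so $T^n(U) \cap V \neq \emptyset$ for some $n$.

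The main obstacle is the construction of the perturbation $h_k$ in the third paragraph. Since $T^{n_k}$ is generically non-invertible, one cannot simply solve $T^{n_k} h_k = p_m(T)x$ with $h_k$ small; instead, a double-diagonal extraction exploiting density of the polynomial orbit $\{q(T)x : q \in \mathbb{C}[t]\}$, in the spirit of Bourdon--Feldman's proof for orbit closures, is required to simultaneously achieve $h_k \to 0$ and $T^{n_k}h_k \to p_m(T)x$. Making this passage rigorous is the technical heart of the theorem.
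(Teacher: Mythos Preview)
The paper does not prove this theorem; it is quoted from Costakis--Manoussos \cite{MR2881536} as motivation. The closest thing the paper itself proves is the sequence analog (Theorem~2.2), and there the hypothesis is already $J_{\{T_n\}}(x)=X$, not merely nonempty interior. Under that stronger hypothesis the paper's route is: Proposition~\ref{P1} gives that every nonzero $p(T)$ has dense range; Lemma~\ref{L2} gives $p(T)J_T(x)\subseteq J_T(p(T)x)$; combining these with closedness yields $J_T(p(T)x)=X$ for every polynomial $p$; cyclicity then makes $\{y:J_T(y)=X\}$ dense, and Theorem~\ref{T1}(iii)$\Rightarrow$(i) finishes. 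Your step~4 is in this spirit, though note you have silently used the dense-range fact: the transfer lemma alone only gives $p(T)X\subseteq J_T(p(T)x)$, and you need $\overline{p(T)X}=X$ to conclude.

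Your step~3, the passage from ``$J_T(x)$ has nonempty interior'' to ``$J_T(x)=X$'', is where the real content of the cited theorem lies, and your proposal does not close it. Asking for $h_k\to 0$ with $T^{n_k}h_k\to p_m(T)x$ \emph{along the same indices} $n_k$ is asking that $p_m(T)x$ belong to a restricted $J$-set at $0$ along the prescribed subsequence $\{n_k\}$; nothing in the hypotheses supplies this, and in general it is false. The Bourdon--Feldman mechanism you invoke relies on the fact that the orbit $\{T^nx\}$ is contained in its own closure, so some iterate $T^Nx$ lands in the interior; for $J$-sets the approximants $x_k$ vary with $k$, no orbit point need lie in $J_T(x)^\circ$, and the diagonal extraction you describe has nothing to extract from. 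The Costakis--Manoussos argument for this step uses a different idea (not reproduced in the present paper), so your sketch as it stands has a genuine gap precisely at the point you flag as the ``main obstacle''.
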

 They have also posed several open
problems which one of them was answered in \cite{MR2826722}. In
particular they have studied $J$-class weighted shifts in
\cite{MR2447911}.  It should be mentioned that H. Salas is the first
mathematician who characterized the hypercyclicity of weighted
shifts in \cite{MR1249890}.  Recently Q. Menet \cite{MR3167484}
generalized the results of  F. Le{\'o}n-Saavedra and V. M{\"u}ller
\cite{MR2261697} and obtained a new criterion for characterizing of
weighted shifts with hypercyclic subspaces.

Let us define the notion of extended limit set. Let $T\in
\mathcal{B}(X)$ and $x\in X$. The \emph{extended limit set} of $x$,
denoted by $J_T(x)$ consists of those vectors $y$ in $X$ if there
exist a strictly increasing sequence of positive integers $\{k_n\}$
and a sequence $\{x_n\} \subseteq X$ such that  $x_n \rightarrow x$
and $T^{k_n}x_n \rightarrow  y$. An operator $T$ is called $J$-class
operator whenever $J_T(x)=X$ for some non-zero vector $x\in X$.

Actually the extended limit set localizes the notion of
hypercyclicity. For more details the reader is referred to
\cite{MR2881536}.

The goal of this paper is to investigate the dynamics of sequences
of linear operators using the  localized hypercyclicity notion. For
this we start by the definition of hypercyclic and topologically
transitive sequences of linear operators.
\begin{defn}
 A sequence of operators
$\{T_n\}_{n=1}^{\infty}\subseteq \mathcal{B}(X)$ is called
\emph{hypercyclic} if there exists a vector $x\in X$ such that the
set $\{T_n x: n\in \mathbb{N}\}$ is dense in $X$. Such a vector is
called hypercyclic vector for the sequence of operators
$\{T_n\}_{n=1}^{\infty}$. We say that an operator $T:X\rightarrow X$
is hypercyclic if the sequence of its iterates
$\{T^n\}_{n=1}^{\infty}$ is hypercyclic.
\end{defn}
\begin{defn}
A sequence of operators $\{T_n\}_{n=1}^{\infty}\subseteq
\mathcal{B}(X)$ is called \emph{topologically transitive} if for any
pair $U, V$ of nonempty open subsets of $X$, there exits $n\in
\mathbb{N}$ such that
$$T_n( U)\cap V\neq \emptyset.$$
\end{defn}
The sequence of linear operators $\{T_n\}_{n=1}^{\infty}$ is
hypercyclic if and only if it is topologically
transitive(\cite{MR1111569}). The  hypercyclicity criterion for
sequences of operators has been studied in \cite{MR1980114}.
 In \cite{MR2261697}, F.
Le{\'o}n-Saavedra and V. M{\"u}ller have studied the  various
conditions and criteria that imply the hypercyclicity of
$\{T_n\}_{n=1}^{\infty}$.

In this paper we first introduce  the extended limit set for a
sequence of bounded linear operators on a separable Banach space $X$
analogous to the single operator case. Then we study the dynamics of
sequence of linear operators by using the extended limit set. It is
turn out that the extended limit set is strongly related to the
hypercyclicity of  a sequence of linear operators. Indeed we show
that a sequence of operators $\{T_n\}_{n=1}^{\infty}\subseteq
\mathcal{B}(X)$ is hypercyclic if and only if there exists a cyclic
vector $x\in X$ such that $J_{\{T^n\}}(x)=X$. Recall that a vector
$x$ is \emph{cyclic} for $\{T_n\}_{n=1}^{\infty}$ if the linear span
of the set $\{T_n x: n\in\mathbb{N} \}$ is dense in $X$.

\begin{defn}
Let $\{T_n\}_{n=1}^{\infty}\subseteq \mathcal{B}(X)$  and $x\in X$.
The \emph{extended limit set} of $x$ under $\{T_n\}_{n=1}^{\infty}$
, denoted by $J_{\{T_n\}}(x)$ is the collection of all  vectors $y$
in $X$ if there exist a strictly increasing sequence of positive
integers $\{k_n\}$ and a sequence $\{x_n\} \subseteq X$ such that
$x_n \rightarrow x$ and $T_{k_n}x_n \rightarrow  y$. A sequence of
bounded linear operators $\{T_n\}_{n=1}^{\infty}$ is called
$J$-class sequence of linear operators whenever $J_{\{T_n\}}(x)=X$
for some non-zero vector $x\in X$.
\end{defn}
By comparing two following definitions
\begin{eqnarray}
J_T(x)=\{y\in X: \mbox{there exist a strictly increasing sequence of
positive integers}\nonumber\\
 \{k_n\}\  \mbox{and a sequence}\  \{x_n\} \subseteq X \ \mbox{such that}\  x_n \rightarrow x \ \mbox{and}\ T^{k_n}x_n \rightarrow
 y\}.\nonumber
\end{eqnarray}
and
\begin{eqnarray}
J_{\{T_n\}}(x)=\{y\in X: \mbox{there exist a strictly increasing
sequence of
positive integers}\nonumber\\
 \{k_n\}\  \mbox{and a sequence}\  \{x_n\} \subseteq X \ \mbox{such that}\  x_n \rightarrow x \ \mbox{and}\ T_{k_n}x_n \rightarrow
 y\}.\nonumber
\end{eqnarray}
it is found out  that $J_T(x)=J_{\{T^n\}}(x)$ for every $x\in X.$

\section{Main Results}
In this section we review some basic properties of the extended
limit sets. Afterwards we characterize the hypercyclicity of
$\{T_n\}\subseteq \mathcal{B}(X)$ based upon these sets.
\begin{lem} If $\{T_n\}$ is hypercyclic then $J_{\{T_n\}}(x)=X$ for
every $x\in X.$
\end{lem}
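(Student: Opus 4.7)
The plan is to show that for arbitrary $x,y\in X$ one has $y\in J_{\{T_n\}}(x)$, by producing a sequence $x_m\to x$ and strictly increasing integers $k_m$ with $T_{k_m}x_m\to y$. My strategy is to approximate $x$ by hypercyclic vectors of $\{T_n\}$ and then use their dense orbits to approximate $y$.

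First, I would verify that the set $HC(\{T_n\})$ of hypercyclic vectors of $\{T_n\}$ is dense in $X$, by the standard Baire argument. Fixing a countable open basis $\{V_m\}_{m\ge 1}$ of $X$, one has $HC(\{T_n\})=\bigcap_{m}\bigcup_{n}T_n^{-1}(V_m)$. Each $T_n^{-1}(V_m)$ is open by continuity of $T_n$, and each union $\bigcup_{n}T_n^{-1}(V_m)$ is dense: since hypercyclicity is equivalent to topological transitivity, for any nonempty open $W\subseteq X$ there is some $n$ with $T_n(W)\cap V_m\neq\emptyset$, i.e.\ $W\cap T_n^{-1}(V_m)\neq\emptyset$. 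The Baire category theorem then yields density of $HC(\{T_n\})$.

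Using this density, I would choose $x_m\in HC(\{T_n\})$ with $\|x_m-x\|<1/m$ for each $m$. For each such $m$, the orbit $\{T_k x_m:k\in\mathbb{N}\}$ is dense in $X$; because $X$ has no isolated points, its intersection with every nonempty open set is infinite (otherwise, deleting those finitely many terms from a small ball around the target would leave a nonempty open set disjoint from a supposedly dense set). Hence, given $k_1<\cdots<k_{m-1}$ already chosen, I can pick $k_m>k_{m-1}$ with $\|T_{k_m}x_m-y\|<1/m$, and then $x_m\to x$ and $T_{k_m}x_m\to y$, giving $y\in J_{\{T_n\}}(x)$.

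The only delicate step is arranging $\{k_m\}$ strictly increasing: topological transitivity alone supplies just a single index $n$ per open pair $(U,V)$, and nothing a priori prevents the same $n$ from being forced on us as $U$ and $V$ shrink around $x$ and $y$. The remedy is precisely the detour through $HC(\{T_n\})$: once we work with a hypercyclic vector, one approximation automatically upgrades to infinitely many via the no-isolated-points observation, which is what lets the inductive choice of $k_m$ go through.
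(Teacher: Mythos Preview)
Your proof is correct and follows the same strategy as the paper's: approximate $x$ by hypercyclic vectors (whose set is dense) and then use their dense orbits to approach $y$. The paper simply cites the density of $HC(\{T_n\})$ and glosses over the strict monotonicity of the indices $k_m$, whereas you supply both details---the Baire argument and the no-isolated-points trick---so your write-up is actually more complete than the original.
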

\begin{proof}
Let $x, y \in X$ be arbitrary. Remind that the set of all
hypercyclic vectors for $\{T_n\}$ is dense in $X$, see
\cite{MR1111569}. Hence  there exists a sequence of hypercyclic
vectors $\{x_n\}$ such that $x_n \rightarrow x$. But in view of the
hypercyclicity definition,  the set $\{T_n x: n\in \mathbb{N}\}$
being dense in $X$, one may find a strictly increasing sequence of
positive integers $\{k_n\}$  such that  $T_{k_n}x_n \rightarrow y$.
This means that $y\in J_{\{T_n\}}(x)$ and so $J_{\{T_n\}}(x)=X$.
\end{proof}
\begin{rem}\label{R1}
Note that an equivalent definition of $J_{\{T_n\}}(x)$ can be stated
in the following.
\begin{eqnarray}
J_{\{T_n\}}(x)=\{y\in X: \mbox{for every pair neighborhoods}\ U, V
\ of\ x, y \mbox{  respectively,}\nonumber \\
\mbox{there exists a positive integer} \ n \   \mbox{such that}\ T_n
U \cap V\neq \emptyset \}.\nonumber
 \end{eqnarray}
Observe now that $\{T_n\}$ is topologically transitive if and only
if $J_{\{T_n\}}(x)=X \ \mbox{for every } x\in X.$
\end{rem}
\begin{lem}\label{L1}
Let $x_n \rightarrow x$ and $y_n \rightarrow y$ for some $x, y \in
X$. If $y_k \in J_{\{T_n\}}(x_k) $ for every $k=1,2,3,...$, then
$y\in J_{\{T_n\}}(x)$.
\end{lem}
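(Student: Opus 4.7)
The plan is to use a standard diagonal argument. The statement is a closedness property of the map $x \mapsto J_{\{T_n\}}(x)$ along convergent sequences, and it follows naturally from the fact that the witness sequences for $y_k \in J_{\{T_n\}}(x_k)$ can be approximated to any prescribed accuracy.

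First, I would unpack the hypothesis quantitatively: for each fixed $k$, because $y_k \in J_{\{T_n\}}(x_k)$, the definition yields a sequence $\{z_j^{(k)}\}_{j=1}^{\infty} \subseteq X$ with $z_j^{(k)} \to x_k$ and a strictly increasing sequence of positive integers $\{m_j^{(k)}\}_{j=1}^{\infty}$ with $T_{m_j^{(k)}} z_j^{(k)} \to y_k$ as $j \to \infty$. In particular, for every $\epsilon > 0$ and every $N \in \mathbb{N}$, I can choose $j$ large enough so that $\|z_j^{(k)} - x_k\| < \epsilon$, $\|T_{m_j^{(k)}} z_j^{(k)} - y_k\| < \epsilon$, and $m_j^{(k)} > N$; the last inequality is the crucial feature of the definition and comes for free since $\{m_j^{(k)}\}_j$ is strictly increasing in $j$.

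Next, I would construct the diagonal sequences. Set $n_0 = 0$. For each $\ell \geq 1$, use $x_k \to x$ and $y_k \to y$ to pick an index $k_\ell$ with $\|x_{k_\ell} - x\| < 1/(2\ell)$ and $\|y_{k_\ell} - y\| < 1/(2\ell)$. Then, using the quantitative observation above applied to $k = k_\ell$, pick $j_\ell$ large enough so that
\[
\|z_{j_\ell}^{(k_\ell)} - x_{k_\ell}\| < \tfrac{1}{2\ell}, \qquad \|T_{m_{j_\ell}^{(k_\ell)}} z_{j_\ell}^{(k_\ell)} - y_{k_\ell}\| < \tfrac{1}{2\ell}, \qquad m_{j_\ell}^{(k_\ell)} > n_{\ell-1}.
\]
Define $w_\ell := z_{j_\ell}^{(k_\ell)}$ and $n_\ell := m_{j_\ell}^{(k_\ell)}$. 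Two triangle inequalities immediately give $\|w_\ell - x\| < 1/\ell$ and $\|T_{n_\ell} w_\ell - y\| < 1/\ell$, while the construction guarantees $n_\ell > n_{\ell - 1}$, so $\{n_\ell\}$ is strictly increasing. Therefore $w_\ell \to x$ and $T_{n_\ell} w_\ell \to y$, which is exactly the definition of $y \in J_{\{T_n\}}(x)$.

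The only point that requires care is ensuring that $\{n_\ell\}$ is strictly increasing and not just distinct; this is the main (and only) obstacle. It is resolved by exploiting the strict monotonicity of each witness index sequence $\{m_j^{(k)}\}_j$ in the parameter $j$: since $m_j^{(k)} \to \infty$, the third condition $m_{j_\ell}^{(k_\ell)} > n_{\ell - 1}$ can always be arranged simultaneously with the two norm inequalities by simply enlarging $j_\ell$. No further ingredients (e.g.\ continuity of the $T_n$, uniform boundedness, or completeness beyond what is built into $X$) are needed.
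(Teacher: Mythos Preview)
Your proof is correct and follows essentially the same approach as the paper: an inductive (diagonal) construction where at stage $\ell$ one first picks $k_\ell$ with $x_{k_\ell}$ and $y_{k_\ell}$ close to $x$ and $y$, then uses the witness for $y_{k_\ell}\in J_{\{T_n\}}(x_{k_\ell})$ to produce $w_\ell$ and an index $n_\ell>n_{\ell-1}$, concluding via the triangle inequality. Your write-up is in fact more careful than the paper's about the strict monotonicity of $\{n_\ell\}$, but the underlying argument is identical.
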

\begin{proof}
The proof is followed same as  the  single operator case stated in
\cite{MR2447911}. By our hypothesis there exists a positive integer
$k_1$ such that $\|x_{k_1}-x\|<\frac{1}{2}$ and
$\|y_{k_1}-y\|<\frac{1}{2}$. Since $y_{k_1} y \in
J_{\{T_n\}}(x_{k_1} )$ there exist a positive integer $l_1$ and $z_1
\in X$ such that $\|z_1- x_{k_1}\|<\frac{1}{2}$ and
$\|T_{l_1}z_1-y_{k_1}y\|<\frac{1}{2}$. Hence $\|z_1- x\|<1$ and
$\|T_{l_1}z_1-y\|<1$ which completes the firs step of the induction.
By proceeding inductively we now construct a strictly increasing
sequence of positive integers $\{l_n\}$ and a sequence of vectors
$\{l_n\}$ in $X$ such that $\|z_n- x\|<\frac{1}{n}$ and
$\|T_{l_n}z_n-y\|<\frac{1}{n}$. Therefore $y\in J_{\{T_n\}}(x)$.
\end{proof}
\begin{lem}
Let $\{T_n\} \subseteq \mathcal{B}(X)$ such that $T_n \rightarrow T$
uniformly on $X$ for some $T\in \mathcal{B}(X).$ If  $T_n y \in
J_{\{T_n\}}(T_n x) $ for every $n=1,2,3,...$ and for arbitrary $x, y
\in X$, then $Ty\in J_{\{T_n\}}(Tx)$.
\end{lem}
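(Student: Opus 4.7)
The plan is to reduce the statement to a direct application of Lemma~\ref{L1}. The hypothesis $T_n \to T$ uniformly on $X$ means $\|T_n - T\| \to 0$, which supplies two convergent sequences tailored for Lemma~\ref{L1}, namely the images of $x$ and $y$ under the operators $T_n$.

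Concretely, first I would set $x_k := T_k x$ and $y_k := T_k y$ for each $k \in \mathbb{N}$. From uniform convergence the standard estimate
\[
\|x_k - Tx\| = \|(T_k - T)x\| \le \|T_k - T\|\,\|x\| \longrightarrow 0,
\]
gives $x_k \to Tx$, and identically $y_k \to Ty$. The second hypothesis says precisely that $y_k \in J_{\{T_n\}}(x_k)$ for every $k$. These are exactly the ingredients required by Lemma~\ref{L1}, applied with the role of the limit points played by $Tx$ and $Ty$; invoking that lemma immediately yields $Ty \in J_{\{T_n\}}(Tx)$, which is the desired conclusion.

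There is essentially no hard step here: the content is packaged into Lemma~\ref{L1}, and the only thing to verify is that the uniform convergence hypothesis delivers the pointwise convergence needed to feed that lemma. The only mild subtlety worth flagging is that the statement assumes uniform convergence on all of $X$ (operator norm convergence), which is strictly stronger than what the proof actually uses; in fact strong operator convergence on the two particular points $x$ and $y$ would already suffice. I would note this in passing, but keep the hypothesis as stated since it is the cleanest to verify in applications.
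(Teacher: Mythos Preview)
Your proof is correct and follows essentially the same idea as the paper's: both use that $T_k x \to Tx$ and $T_k y \to Ty$ together with the hypothesis $T_k y \in J_{\{T_n\}}(T_k x)$. The only difference is cosmetic---you invoke Lemma~\ref{L1} directly, while the paper repeats its $\varepsilon$-argument inline; your packaging is the cleaner of the two.
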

\begin{proof}
For an arbitrary $\epsilon >0$ there exists a positive integer $k_1$
such that $\|T_{k_1}-T\|<\frac{\epsilon}{2}$. Since $T_{k_1} y \in
J_{\{T_n\}}(T_{k_1} x)$ there exist a positive integer $l_1$ and
$z_1 \in X$ such that $\|z_1- T_{k_1}x\|<\frac{\epsilon}{2}$ and
$\|T_{l_1}z_1-T_{k_1}y\|<\frac{\epsilon}{2}$. Hence $\|z_1-
Tx\|<\epsilon$ and $\|T_{l_1}z_1-Ty\|<\epsilon$ i.e., $Ty\in
J_{\{T_n\}}(Tx)$.
\end{proof}
\begin{defn}
Let $\{T_n\}_{n=1}^{\infty}\subseteq \mathcal{B}(X)$  and $x\in X$.
The \emph{limit set} of $x$ under $\{T_n\}_{n=1}^{\infty}$, denoted
by $L_{\{T_n\}}(x)$ is defined as follows
\begin{eqnarray}
L_{\{T_n\}}(x)=\{y\in X: \mbox{there exist a strictly increasing
sequence of positive}\nonumber \\ \mbox{ integers}\ \{k_n\},\
T_{k_n}x \rightarrow
 y\}.\nonumber
\end{eqnarray}
In case $\{T_n\}_{n=1}^{\infty}$ is sequence of invertible linear
operators the sets $L_{\{T_n\}}^+(x)$, $J_{\{T_n\}}^+(x)$
($L_{\{T_n\}}^-(x)$, $J_{\{T_n\}}^-(x)$) denote the limit set and
extended limit set of $x$ under $\{T_n\}_{n=1}^{\infty}$
($\{T_n^{-1}\}_{n=1}^{\infty}$).
\end{defn}
\begin{lem}\label{L2}
Let $\{T_n\}$ be a sequence of mutually commuting operators in
$\mathcal{B}(X)$. For all $x\in X$, the sets $J_{\{T_n\}}(x)$ and
$L_{\{T_n\}}(x)$ are closed. Moreover for each $T_m\in
\mathcal{B}(X)$ we have  $T_m J_{\{T_n\}}(x)\subseteq
J_{\{T_n\}}(T_m x)$ and  $T_m L_{\{T_n\}}(x)\subseteq
L_{\{T_n\}}(T_m x)$.
\end{lem}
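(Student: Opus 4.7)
The statement splits into four assertions: closedness of $J_{\{T_n\}}(x)$ and of $L_{\{T_n\}}(x)$, plus the two inclusions. Only the closedness of $L_{\{T_n\}}(x)$ needs real work; everything else is essentially a one-line consequence of material already in the paper.

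\emph{Closedness of $J_{\{T_n\}}(x)$.} I would take a sequence $y_k \in J_{\{T_n\}}(x)$ with $y_k \to y$ and apply Lemma \ref{L1} with the constant sequence $x_k := x$. Since $x_k \to x$, $y_k \to y$, and $y_k \in J_{\{T_n\}}(x_k)$ for all $k$, the lemma gives $y \in J_{\{T_n\}}(x)$. Note that this step does not use commutativity.

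\emph{Closedness of $L_{\{T_n\}}(x)$.} Here I would run a diagonal extraction. Given $y_k \in L_{\{T_n\}}(x)$ with $y_k \to y$, for each $k$ there is a strictly increasing sequence $\{k_n^{(k)}\}_n$ with $T_{k_n^{(k)}} x \to y_k$ as $n \to \infty$. Inductively I would choose indices $n_k$ so large that simultaneously $\|T_{k_{n_k}^{(k)}} x - y_k\| < 1/k$ and $k_{n_k}^{(k)} > k_{n_{k-1}}^{(k-1)}$; the second condition is achievable because $\{k_n^{(k)}\}_n$ is unbounded. Setting $j_k := k_{n_k}^{(k)}$ gives a strictly increasing sequence with $\|T_{j_k} x - y\| \le \|T_{j_k} x - y_k\| + \|y_k - y\| \to 0$, so $y \in L_{\{T_n\}}(x)$. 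This diagonal bookkeeping is the only point where I expect any friction, and it is routine.

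\emph{The inclusions.} For $T_m J_{\{T_n\}}(x) \subseteq J_{\{T_n\}}(T_m x)$, let $y \in J_{\{T_n\}}(x)$, so there exist a strictly increasing $\{k_n\}$ and $x_n \to x$ with $T_{k_n} x_n \to y$. Continuity of $T_m$ gives $T_m x_n \to T_m x$ and $T_m T_{k_n} x_n \to T_m y$, and the mutual commutativity of the $T_i$'s lets me rewrite $T_m T_{k_n} x_n = T_{k_n}(T_m x_n)$, which is exactly the data needed to conclude $T_m y \in J_{\{T_n\}}(T_m x)$. The inclusion $T_m L_{\{T_n\}}(x) \subseteq L_{\{T_n\}}(T_m x)$ is the same argument with $x_n$ replaced by the constant $x$, and is in fact simpler. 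The single technical hypothesis used here is that $T_m$ commutes with every $T_{k_n}$; without commutativity the argument fails, which is why the hypothesis appears in the statement.
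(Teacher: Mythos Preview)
Your proposal is correct. For the inclusion $T_m J_{\{T_n\}}(x)\subseteq J_{\{T_n\}}(T_m x)$ your argument is exactly the one the paper gives: pick $y\in J_{\{T_n\}}(x)$, use continuity to get $T_m x_n\to T_m x$ and $T_m T_{k_n}x_n\to T_m y$, and invoke commutativity to rewrite $T_m T_{k_n}x_n=T_{k_n}(T_m x_n)$. The paper's proof in fact writes out only this one inclusion and leaves the closedness assertions and the $L$-inclusion implicit; you supply those details, using Lemma~\ref{L1} (with the constant sequence $x_k=x$) for closedness of $J_{\{T_n\}}(x)$ and a standard diagonal extraction for closedness of $L_{\{T_n\}}(x)$, both of which are fine and in the spirit of the paper.
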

\begin{proof}
Take $y\in J_{\{T_n\}}(x)$. There exist a strictly increasing
sequence of positive integers $\{k_n\}$  and a sequence  $\{x_n\}$
in $ X$ such that  $x_n \rightarrow x$  and
 $T_{k_n}x_n \rightarrow  y$. Then $T_{k_n}T_m x_n = T_mT_{k_n}(x)\rightarrow  T_m
 y$ and so $T_m y \in J_{\{T_n\}}(T_m x)$ since $T_m x_n \rightarrow T_m
 x$.
\end{proof}
\begin{lem}
Let $\{T_n\}$ be a commuting sequence of invertible operators. Then
 $T_n^{-1} J_{\{T_n\}}(x)= J_{\{T_n\}}(T_n x)$ for every $x\in X.$
\end{lem}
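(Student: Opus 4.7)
The plan is to upgrade the one-sided inclusion of Lemma 2.3 to an equality by exploiting invertibility. In what follows I read the statement as asserting, for each fixed $m$, that $T_m J_{\{T_n\}}(x) = J_{\{T_n\}}(T_m x)$ (equivalently, after applying $T_m^{-1}$, that $T_m^{-1} J_{\{T_n\}}(T_m x) = J_{\{T_n\}}(x)$); the $n$ in the displayed statement just singles out a representative operator from the sequence.

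The inclusion $T_m J_{\{T_n\}}(x) \subseteq J_{\{T_n\}}(T_m x)$ is already contained in Lemma 2.3, so the only work is the reverse inclusion. The first step is to observe that the commutativity of the family passes to the inverses: from $T_m T_k = T_k T_m$, multiplying on both sides by $T_m^{-1}$ gives $T_k T_m^{-1} = T_m^{-1} T_k$ for every $k$. Thus $T_m^{-1}$ commutes with every member of the sequence, and in particular $T_m^{-1}$ is a bounded linear operator that can be ``pulled through'' any $T_{k_j}$.

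Having this, I would simply re-run the argument of Lemma 2.3 with $T_m^{-1}$ playing the role of $T_m$. Take $y \in J_{\{T_n\}}(T_m x)$ and pick, by the definition of the extended limit set, a sequence $x_j \to T_m x$ and a strictly increasing sequence of positive integers $\{k_j\}$ with $T_{k_j} x_j \to y$. Continuity of $T_m^{-1}$ gives $T_m^{-1} x_j \to T_m^{-1}(T_m x) = x$, and the commutativity established above yields
\[
T_{k_j}\bigl(T_m^{-1} x_j\bigr) = T_m^{-1}\bigl(T_{k_j} x_j\bigr) \longrightarrow T_m^{-1} y .
\]
Hence $T_m^{-1} y \in J_{\{T_n\}}(x)$, i.e., $y \in T_m J_{\{T_n\}}(x)$, giving the reverse inclusion.

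There is no real obstacle here; the only subtlety is the preliminary observation that commutativity is inherited by inverses, which is what makes Lemma 2.3 applicable in both directions and promotes the inclusion to an equality. The argument also shows, symmetrically, that $T_m^{-1} J_{\{T_n\}}(x) = J_{\{T_n\}}(T_m^{-1} x)$.
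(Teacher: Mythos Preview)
Your proof is correct and follows essentially the same route as the paper: one inclusion comes directly from Lemma~\ref{L2}, and the reverse is obtained by applying the bounded inverse $T_m^{-1}$ to a defining sequence and using that $T_m^{-1}$ commutes with every $T_{k_j}$. Your explicit remark that commutativity passes to inverses is a helpful clarification that the paper leaves implicit.
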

\begin{proof}
By Lemma \ref{L2}, we have $ J_{\{T_n\}}(x)\subseteq T^{-1}_n
J_{\{T_n\}}(T_n x)$. Now pick $y\in T_n^{-1} J_{\{T_n\}}(x)$. There
are a strictly increasing sequence of positive integers $\{k_i\}$
and a sequence  $\{x_i\}$ in $ X$ such that  $x_i \rightarrow T_n x$
and
 $T_{k_i}x_i \rightarrow  T_n y$ as $i\rightarrow \infty$. Hence $T_n^{-1} x_i \rightarrow
 x$ and  $T_{k_i}T_n^{-1}x_i =T_n^{-1}T_{k_i} x_i \rightarrow   y$. That is $y\in J_{\{T_n\}}(x)$.
\end{proof}
\begin{prop}
Let $\{T_n\}$ be a sequence of invertible operators. Then $y\in
J_{\{T_n\}}^+(x)$ if and only if $x\in J_{\{T_n\}}^- (y)$ for every
$x, y \in X.$
\end{prop}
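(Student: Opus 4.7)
The plan is to unpack the two definitions and observe that the roles of the approximating sequence and its image are simply swapped when one passes from $\{T_n\}$ to $\{T_n^{-1}\}$. Since $(T_n^{-1})^{-1}=T_n$, proving one direction gives the other by symmetry, so I only need to handle, say, the forward implication.

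Concretely, suppose $y\in J^{+}_{\{T_n\}}(x)$. By the definition of the extended limit set for $\{T_n\}$, there exist a strictly increasing sequence of positive integers $\{k_n\}$ and a sequence $\{x_n\}\subseteq X$ with $x_n\to x$ and $T_{k_n}x_n\to y$. The key step is to promote the image sequence to the role of approximating sequence: set $y_n := T_{k_n}x_n$, so that $y_n\to y$. Applying $T_{k_n}^{-1}$ gives $T_{k_n}^{-1}y_n = x_n \to x$. This is precisely the condition, with the same index sequence $\{k_n\}$, that witnesses $x\in J^{-}_{\{T_n\}}(y)$ via the definition of the extended limit set applied to the inverse sequence $\{T_n^{-1}\}$.

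For the reverse implication I would simply note that $J^{-}_{\{T_n\}}(y) = J^{+}_{\{T_n^{-1}\}}(y)$ by definition, and that $\{(T_n^{-1})^{-1}\} = \{T_n\}$, so applying the forward direction to the sequence $\{T_n^{-1}\}$ (whose inverses are again invertible operators) yields the desired statement without further work.

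There is really no substantive obstacle here; the argument is a bookkeeping translation of the defining conditions. The only small point to be careful about is that nothing about continuity of the inverses $T_{k_n}^{-1}$ (or uniform boundedness) is actually needed — we are not taking limits through $T_{k_n}^{-1}$, but rather using the exact identity $T_{k_n}^{-1}y_n=x_n$ that holds from the very definition $y_n=T_{k_n}x_n$. Thus invertibility is used only to make sense of the set $J^{-}_{\{T_n\}}(y)$ in the first place.
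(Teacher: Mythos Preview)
Your argument is correct and essentially identical to the paper's: both pick witnesses $x_n\to x$, $T_{k_n}x_n\to y$, set $y_n:=T_{k_n}x_n$, and observe that $T_{k_n}^{-1}y_n=x_n\to x$ certifies $x\in J_{\{T_n\}}^-(y)$, with the converse handled by symmetry. Your added remark that no continuity of the inverses is used is accurate and a worthwhile clarification, but the underlying proof is the same bookkeeping as in the paper.
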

\begin{proof}
Choose $y\in J_{\{T_n\}}^+(x)$. There are a strictly increasing
sequence of positive integers $\{k_n\}$  and a sequence  $\{x_n\}$
in $ X$ such that  $x_n \rightarrow x$  and
 $T_{k_n}x_n \rightarrow  y$. Then $T_{k_n}^{-1}T_{k_n}x_n=x_n \rightarrow
 x$ and so $x\in J_{\{T_n\}}^- (y)$. The reverse implication is
 proved similarly.
\end{proof}
\begin{prop}
For $\{T_n\} \subseteq \mathcal{B}(X)$ assume that $\sup_n \|T_n\|<
\infty.$ Then $J_{\{T_n\}}(x)= L_{\{T_n\}}(x)$ for every $x \in X.$
\end{prop}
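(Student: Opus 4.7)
The plan is to establish the two inclusions separately. The inclusion $L_{\{T_n\}}(x) \subseteq J_{\{T_n\}}(x)$ is immediate from the definitions: given $y \in L_{\{T_n\}}(x)$ realized by a strictly increasing $\{k_n\}$ with $T_{k_n}x \to y$, the constant sequence $x_n = x$ witnesses $y \in J_{\{T_n\}}(x)$. This inclusion uses no boundedness hypothesis and holds in full generality.

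For the reverse inclusion $J_{\{T_n\}}(x) \subseteq L_{\{T_n\}}(x)$, I would set $M := \sup_n \|T_n\| < \infty$ and fix $y \in J_{\{T_n\}}(x)$. By definition there exist a strictly increasing sequence $\{k_n\}$ of positive integers and vectors $x_n \in X$ with $x_n \to x$ and $T_{k_n}x_n \to y$. The key move is to compare $T_{k_n}x$ directly with $y$ through the triangle inequality
\[
\|T_{k_n}x - y\| \le \|T_{k_n}(x - x_n)\| + \|T_{k_n}x_n - y\| \le M\|x - x_n\| + \|T_{k_n}x_n - y\|,
\]
in which the uniform bound $M$ is precisely what controls the first summand. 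Both terms on the right tend to zero, so $T_{k_n}x \to y$ along the same strictly increasing index sequence $\{k_n\}$, giving $y \in L_{\{T_n\}}(x)$.

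No step is genuinely delicate; the argument is essentially an $\varepsilon/2$ estimate. The only point worth emphasizing is that the hypothesis $\sup_n \|T_n\| < \infty$ is used in an essentially tight way: it is exactly what converts the norm convergence $x_n \to x$ into the uniform convergence $T_{k_n}(x - x_n) \to 0$ regardless of which subsequence $\{k_n\}$ is selected. Without this hypothesis a sequence with unbounded norms can magnify arbitrarily small perturbations of $x$ into nontrivial displacements of the image, which is the standard mechanism by which $J_{\{T_n\}}(x)$ can strictly exceed $L_{\{T_n\}}(x)$.
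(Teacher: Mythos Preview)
Your proof is correct and follows essentially the same approach as the paper: the trivial inclusion $L_{\{T_n\}}(x)\subseteq J_{\{T_n\}}(x)$ via the constant sequence, and the reverse inclusion via the triangle inequality $\|T_{k_n}x-y\|\le M\|x-x_n\|+\|T_{k_n}x_n-y\|$ with $M=\sup_n\|T_n\|$. The paper's argument is identical in structure, just stated more tersely.
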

\begin{proof}
Note that the inclusion $L_{\{T_n\}}(x) \subseteq J_{\{T_n\}}(x)$
holds trivially. Now suppose $y\in J_{\{T_n\}}(x)$. Then there exist
a strictly increasing sequence of positive integers $\{k_n\}$  and a
sequence  $\{x_n\}$ in $ X$ such that  $x_n \rightarrow x$  and
 $T_{k_n}x_n \rightarrow  y$. Put $M=\sup_n \|T_n\|$. Thus we have
 \begin{eqnarray}
   \|T_{k_n}x-y\| &\leq &  \|T_{k_n}x-T_{k_n}x_n\|+ \|T_{k_n}x_n-y\|\nonumber \\
    &\leq & M \|x_n-x\|+ \|T_{k_n}x_n-y\|\rightarrow 0\nonumber
 \end{eqnarray}
 as $n\rightarrow \infty $. That is $ y\in L_{\{T_n\}}(x).$
\end{proof}
\begin{lem}\label{L3}
If $J_{\{T_n\}}(x)=X$ for some non-zero vector $x\in X$, then
$J_{\{T_n\}}(\lambda x)=X$ for every $\lambda \in \mathbb{C}.$
\end{lem}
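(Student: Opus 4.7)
The plan is to split into two cases, $\lambda \neq 0$ and $\lambda = 0$, and to handle the zero case by a limiting argument that invokes Lemma \ref{L1}.

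For $\lambda \neq 0$, the argument is purely algebraic and uses linearity of each $T_{k_n}$. Given an arbitrary target $y \in X$, I would apply the hypothesis $J_{\{T_n\}}(x)=X$ to the vector $\lambda^{-1} y$ to obtain a strictly increasing sequence $\{k_n\}$ of positive integers and vectors $x_n \to x$ with $T_{k_n} x_n \to \lambda^{-1} y$. Then the scaled sequence $\lambda x_n$ converges to $\lambda x$, while $T_{k_n}(\lambda x_n) = \lambda T_{k_n} x_n \to y$, proving $y \in J_{\{T_n\}}(\lambda x)$. This gives $J_{\{T_n\}}(\lambda x) = X$ for every non-zero $\lambda$.

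For $\lambda = 0$, I would bootstrap from the preceding step. Let $y \in X$ be arbitrary. Apply the just-established conclusion to the scalars $1/k$ ($k=1,2,\ldots$) to obtain $y \in J_{\{T_n\}}\bigl(\tfrac{1}{k} x\bigr)$ for every $k$. Now set $x_k := \tfrac{1}{k} x$ and $y_k := y$, so that $x_k \to 0$ and $y_k \to y$ while $y_k \in J_{\{T_n\}}(x_k)$ for all $k$. Lemma \ref{L1} then delivers $y \in J_{\{T_n\}}(0)$, giving $J_{\{T_n\}}(0) = X$.

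There is no real obstacle here: the first case is immediate from linearity and the second case is an essentially one-line application of Lemma \ref{L1}. The only thing worth being careful about is that the $k_n$ produced in the $\lambda\neq 0$ case must remain a strictly increasing sequence of positive integers, which is automatic since we inherit it directly from the hypothesis $J_{\{T_n\}}(x) = X$.
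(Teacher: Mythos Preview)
Your proposal is correct and follows essentially the same approach as the paper: the nonzero case via the scaling identity $J_{\{T_n\}}(\lambda x)=\lambda J_{\{T_n\}}(x)$ (which you verify directly), and the zero case by applying Lemma~\ref{L1} to a sequence of nonzero scalars tending to $0$. The only difference is cosmetic---you spell out the $\lambda\neq 0$ step explicitly and choose $\lambda_k=1/k$, whereas the paper invokes the identity and an arbitrary null sequence.
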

\begin{proof}
The case $\lambda\neq 0$ is evident, since it is easy to check that
$J_{\{T_n\}}(\lambda x)= \lambda J_{\{T_n\}}( x)$. For the rest of
the proof let $\{\lambda_n\}$ be a sequence of non-zero complex
numbers converging to 0 and take $y\in J_{\{T_n\}}(x)$. Then by the
previous equation $y\in J_{\{T_n\}}(\lambda_n x)$ for every $n$.
Since $\lambda_n \rightarrow 0$, an application of Lemma \ref{L1}
yields that $y\in J_{\{T_n\}}(0)$. Therefore $J_{\{T_n\}}(0)=X$.
\end{proof}
\begin{prop}
Let $\{T_n\} \subseteq \mathcal{B}(X)$. Then the set $A=\{x\in X:
J_{\{T_n\}}(x)=X\}$ is a closed, connected and $T_n A \subseteq A.$
\end{prop}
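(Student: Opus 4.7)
The plan is to verify the three properties of $A$ separately, leaning on Lemmas \ref{L1}, \ref{L2} and \ref{L3} already at our disposal.

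First, closedness follows from a one-line application of Lemma \ref{L1}. Suppose $x_k \in A$ with $x_k \to x$, and fix any $y \in X$. Setting $y_k := y$ (the constant sequence) gives $y_k \to y$ together with $y_k = y \in J_{\{T_n\}}(x_k) = X$ for every $k$, so Lemma \ref{L1} produces $y \in J_{\{T_n\}}(x)$. As $y$ was arbitrary, $J_{\{T_n\}}(x) = X$ and hence $x \in A$.

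Second, connectedness is obtained via Lemma \ref{L3}. If $A = \emptyset$ or $A = \{0\}$ the claim is trivial, so assume $A$ contains a non-zero vector. Lemma \ref{L3} then places $0 \in A$; moreover the identity $J_{\{T_n\}}(\lambda x) = \lambda J_{\{T_n\}}(x)$ used in the proof of that lemma shows $\lambda x \in A$ for every $x \in A$ and every $\lambda \in \mathbb{C}\setminus\{0\}$. Consequently, for each $x \in A$, the segment $\{tx : t \in [0,1]\}$ lies inside $A$ and joins $x$ to $0$, so $A$ is path connected, hence connected.

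Third, for the invariance $T_m A \subseteq A$, fix $x \in A$ and $m \ge 1$. Lemma \ref{L2} simultaneously supplies the inclusion $T_m J_{\{T_n\}}(x) \subseteq J_{\{T_n\}}(T_m x)$ and the closedness of $J_{\{T_n\}}(T_m x)$. Substituting $J_{\{T_n\}}(x)=X$ and taking closures yields $\overline{T_m X} \subseteq J_{\{T_n\}}(T_m x)$, which already forces $T_m x \in A$ whenever $T_m$ has dense range.

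I expect this last density to be the main obstacle. In the iterate case $T_n = T^n$ it is automatic: the relation $T^{k_n} x_n = T(T^{k_n-1} x_n) \in TX$ already shows that $J_T(x) = X$ forces $TX$ to be dense, and then each $T^m X$ is dense because the continuous image of a dense set under $T^{m-1}$ of a dense set lies in the closure of $T^m X$. For a general sequence of mutually commuting operators, however, $J_{\{T_n\}}(x) = X$ only yields density of the collective image $\bigcup_{n \ge N} T_n X$ for each $N$, and not of the individual ranges $T_m X$; so density of $T_m X$ either has to be imposed as a structural hypothesis on $\{T_n\}$ or extracted from the commutativity framework in which Lemma \ref{L2} is being applied. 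This is the one step where the passage from a single operator to a sequence genuinely costs something, while closedness and connectedness carry over with no real change.
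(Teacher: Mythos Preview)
Your treatment of closedness and connectedness is essentially identical to the paper's: both invoke Lemma~\ref{L1} for closedness and Lemma~\ref{L3} (together with the homogeneity $J_{\{T_n\}}(\lambda x)=\lambda J_{\{T_n\}}(x)$) for connectedness. Your extra remark that this actually yields path-connectedness through the origin is a harmless sharpening.

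On the invariance $T_mA\subseteq A$, you are not missing a trick---the paper's own proof simply omits this part. Your diagnosis of the obstruction is accurate and worth recording. Two separate issues arise. First, Lemma~\ref{L2} is stated for \emph{mutually commuting} sequences, while the Proposition carries no such hypothesis; so even the inclusion $T_m J_{\{T_n\}}(x)\subseteq J_{\{T_n\}}(T_m x)$ is not available as written. Second, granting commutativity, the chain
\[
T_m X = T_m J_{\{T_n\}}(x)\subseteq J_{\{T_n\}}(T_m x)=\overline{J_{\{T_n\}}(T_m x)}
\]
only gives $\overline{T_m X}\subseteq J_{\{T_n\}}(T_m x)$, and you correctly observe that for a general sequence nothing forces $T_m$ to have dense range: from $J_{\{T_n\}}(x)=X$ one can extract density of $\bigcup_{n\ge N}T_nX$ (since each $T_{k_n}x_n$ lies in some $T_{k_n}X$), but not of an individual $T_mX$. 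Your contrast with the iterate case $T_n=T^n$, where $T^{k_n}x_n\in TX$ automatically yields $\overline{TX}=X$, pinpoints exactly where the single-operator argument breaks down.

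In short: on the two claims the paper does prove, your argument coincides with it; on the third claim, the paper offers no argument, and your reservations about its validity in the stated generality are justified.
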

\begin{proof}
By Lemma \ref{L1} it is easily inferred that $A$ is closed. To show
that $A$ is connected set let $x\in A$ be an arbitrary vector. Then
Lemma \ref{L3}, for every $\lambda \in \mathbb{C}$ implies that
$J_{\{T_n\}}(0)=J_{\{T_n\}}(\lambda x)= \lambda J_{\{T_n\}}(x)=X$.
Thus $A$ is connected.
\end{proof}
\begin{thm}\label{T1}
Let $\{T_n\} \subseteq \mathcal{B}(X)$. The following are
equivalent.
\begin{itemize}
  \item [(i)] The sequence $\{T_n\}$ is hypercyclic;
  \item [(ii)] The equation $J_{\{T_n\}}(x)=X$ holds for every $x\in
  X$;
  \item [(iii)] The set $A=\{x\in X: J_{\{T_n\}}(x)=X\}$ is dense in
  $X$;
  \item [(iv)] The set  $A=\{x\in X: J_{\{T_n\}}(x)=X\}$ has
  non-empty interior.
\end{itemize}
\end{thm}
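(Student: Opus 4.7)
The plan is to prove the four conditions equivalent by establishing the cycle $(i)\Rightarrow (ii)\Rightarrow (iii)\Rightarrow (iv)\Rightarrow (i)$, relying on the material already developed earlier in the section. Three of the four implications are short: $(i)\Rightarrow (ii)$ is exactly Lemma~2.1; $(ii)\Rightarrow (iii)$ is tautological, since $(ii)$ forces $A=X$, which is certainly dense; and $(iii)\Rightarrow (iv)$ follows immediately from the preceding proposition, which records that $A$ is closed---so a dense $A$ coincides with $X$ and therefore has non-empty interior.

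The content of the theorem is in $(iv)\Rightarrow (i)$, which I would execute in three sub-steps: (a) upgrade ``non-empty interior of $A$'' to ``$A$ is dense'', (b) upgrade ``$A$ is dense'' to ``$A=X$'', and (c) convert $A=X$ into hypercyclicity. For (a), fix an open ball $B(x_0,r)\subseteq A$. Since $x_0\in A$ we have $J_{\{T_n\}}(x_0)=X$, and Remark~\ref{R1} reads this as: for every $y\in X$ and every neighborhood $W$ of $y$, some $T_n$ carries part of $B(x_0,r)$ into $W$. Hence $\bigcup_{n\ge 1}T_n(B(x_0,r))$ is dense in $X$. Invoking the $T_n$-invariance $T_nA\subseteq A$ also asserted in the preceding proposition, this dense union lies inside $A$, so $A$ is dense. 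For (b), given an arbitrary $x\in X$ choose $x_k\in A$ with $x_k\to x$; for any target $y\in X$, apply Lemma~\ref{L1} with the constant sequence $y_k\equiv y\in J_{\{T_n\}}(x_k)=X$ to deduce $y\in J_{\{T_n\}}(x)$, so $A=X$, which is $(ii)$. For (c), Remark~\ref{R1} translates $(ii)$ into topological transitivity of $\{T_n\}$, and the sequence version of Birkhoff's transitivity theorem cited as \cite{MR1111569} then delivers hypercyclicity.

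The main obstacle is step (a), the passage from a single interior ball of $A$ to density of $A$ in $X$. The scalar-multiplication invariance furnished by Lemma~\ref{L3} only populates $A$ inside a cone through $\mathbb{C}x_0$, which fails to cover $X$ whenever $0\notin B(x_0,r)$, so one cannot conclude density of $A$ from geometric considerations alone. The argument therefore leans critically on the $T_n$-invariance built into the preceding proposition in order to push the orbit $\bigcup_n T_n(B(x_0,r))$---which is dense directly from the definition of $J_{\{T_n\}}(x_0)=X$---into $A$ itself. If that invariance were unavailable I would expect to need additional structural hypotheses on $\{T_n\}$, such as mutual commutativity combined with dense range (compare Lemma~\ref{L2}), in order to bootstrap the local transitivity in a neighborhood of $B(x_0,r)$ to all of $X$.
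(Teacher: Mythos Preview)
Your argument is correct relative to the results stated earlier in the section, and it is close in spirit to the paper's, though the organization and one key invocation differ. The paper does not run a cycle: it proves $(i)\Rightarrow(ii)$ as you do, establishes $(ii)\Leftrightarrow(iii)$ directly via Lemma~\ref{L1}, argues $(iv)\Rightarrow(ii)$, and then deduces $(ii)\Rightarrow(i)$ by an explicit Baire-category computation (showing each $\bigcup_n T_n^{-1}(B_j)$ is dense from Remark~\ref{R1}), whereas you outsource that last step to the transitivity/hypercyclicity equivalence already cited from \cite{MR1111569}. The more substantive difference is in your step~(a): the paper's own treatment of $(iv)\Rightarrow(ii)$ only gets as far as choosing $x\in A^\circ$, an arbitrary $y\in X$, and sequences $x_n\in A$ with $x_n\to x$ and $T_{k_n}x_n\to y$, and then stops without drawing a conclusion. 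Your explicit appeal to the invariance $T_nA\subseteq A$ from the preceding proposition is precisely what is needed to finish that line (then $T_{k_n}x_n\in A$, so $y\in\overline{A}=A$, and $A=X$), so in effect you have completed the paper's sketch.

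Your closing caveat is on target and worth stating plainly: the paper's proof of that proposition verifies only that $A$ is closed and connected; it never proves $T_nA\subseteq A$, and the obvious route to it (through Lemma~\ref{L2}) requires the $T_n$ to commute. Thus both your argument and the paper's rest on an invariance claim whose justification the paper has not actually supplied in the generality of Theorem~\ref{T1}. Your proof is not at fault for invoking a stated proposition, but you are right to be uneasy about it.
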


\begin{proof}
To prove that $(i)$ implies $(ii)$ let $x, y \in X$. We know that
the set of hypercyclic vectors is $G_\delta$ and dense in $X$ (see
\cite{MR2826721}). Hence there exits a sequence of hypercyclic
vectors $\{x_n\}$ such that $x_n \rightarrow x$. Subsequently one
may find a strictly increasing sequence of positive integers
$\{k_n\}$  on which
 $T_{k_n}x_n \rightarrow  y$ as $n\rightarrow \infty$. Namely $y \in
 J_{\{T_n\}}(x)$.\\
 The implication $(ii)\Rightarrow (iii)$ is trivial. Moreover
 applying Lemma \ref{L1} makes sure that $(iii)$ implies $(ii)$. \\
 Now we show that $(iv)$ implies $(ii)$. Fix $x\in A^\circ$ (interior of
 $A$) and let $y\in X$ be an arbitrary. Since $y\in
 X=J_{\{T_n\}}(x)$ there exist
a strictly increasing sequence of positive integers $\{k_n\}$  and a
sequence  $\{x_n\}$ in $ X$ such that  $x_n \rightarrow x$  and
 $T_{k_n}x_n \rightarrow  y$.
 Without loss of generality we may assume that $x_n\in A$ for every
 $n\in \mathbb{N}$. Indeed this can be done in sake of the our
 assumption $x\in A^\circ$. \\
 To prove the implication $(ii)\Rightarrow (i)$, let  $\{B_j\}$ be a
 countable open basis for the relative topology of $X$.  By the
 Baire category theorem and the hypercyclic vectors description (\cite{MR2826721}, \cite{MR2720700})
 it is sufficient to show that the set $\bigcup_{n=1}^{\infty} T_n^{-1}( B_j)$ is
  dense in $X$ for every $j$. In fact the set of all
 hypercyclic vectors for $\{T_n\}$ is equal
 $$\bigcap_{j=1}^{\infty}
\bigcup_{n=1}^{\infty} T_n^{-1}( B_j).$$ At this moment consider the
equivalent definition of $J_{\{T_n\}}(x)$ described in Remark
\ref{R1}. Let $x\in X$,  the neighborhood $U$ of $x$ and $B_j$ be
given. Since $J_{\{T_n\}}(x)=X$, there exists $y\in U$ and $n\in
\mathbb{N}$ such that $T_n y \in B_j$ or equivalently $y\in
T_n^{-1}( B_j)$. This fact completes the proof since $U$ was chosen
arbitrarily.
\end{proof}
\begin{prop}
Let $\{T_n\} \subseteq \mathcal{B}(X)$. Then the set
$A=\bigcup_{n=1}^{\infty}\{x\in X: J_{{T_n}}(x)=X\}$ is a closed,
connected and $\bigcup_{n=1}^{\infty}T_n A \subseteq A.$
\end{prop}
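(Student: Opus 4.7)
The plan is to exploit the decomposition $A=\bigcup_{n=1}^{\infty} A_n$ where $A_n:=\{x\in X : J_{T_n}(x)=X\}$, and to reduce each of the three assertions to the preceding proposition applied to the single operator $T_n$, i.e.\ to its iterate sequence $\{T_n^k\}_{k=1}^{\infty}$ via the identity $J_{T}(x)=J_{\{T^k\}}(x)$ recorded in the introduction. Under that identification each $A_n$ is already known to be closed and connected, with $T_n A_n\subseteq A_n$.

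For connectedness, I would first verify that whenever $A_n$ is nonempty it contains the origin. Indeed, taking any nonzero $x\in A_n$ and applying Lemma \ref{L3} to the sequence $\{T_n^k\}_k$ forces $0\in A_n$. Thus all nonempty $A_n$ share the common point $0$, and a union of connected sets with a common point is connected, so $A$ is connected.

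For the invariance $\bigcup_{n=1}^{\infty} T_n A\subseteq A$, I would pick $x\in A$ and choose $m$ with $x\in A_m$. The single-operator invariance $T_m A_m\subseteq A_m$ immediately yields $T_m x\in A$. For $T_\ell x$ with $\ell\neq m$, I would invoke Lemma \ref{L2} applied to the iterates of $T_m$, which under the commutativity hypothesis gives the inclusion $T_\ell J_{T_m}(x)\subseteq J_{T_m}(T_\ell x)$; combined with $J_{T_m}(x)=X$, this yields $J_{T_m}(T_\ell x)=X$, hence $T_\ell x\in A_m\subseteq A$.

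Closedness is the step I expect to be the main obstacle. Given $x_k\to x$ with $x_k\in A_{n_k}$, if the index sequence $\{n_k\}$ admits a bounded subsequence one passes to a constant subsequence $n_k\equiv m$ and Lemma \ref{L1} (applied to $\{T_m^j\}_j$) delivers $x\in A_m\subseteq A$. The delicate case is $n_k\to\infty$, where the Lemma \ref{L1}-type witnesses $z_k\to x$ and $N_k$ with $T_{n_k}^{N_k}z_k\to y$ are drawn from distinct orbit systems and do not obviously collapse into a single $J_{T_m}(x)$; I would either attempt a diagonal extraction aimed at producing a fixed $m$ capturing $x$, or lean on auxiliary structural assumptions on $\{T_n\}$ (such as equicontinuity, or that only finitely many $A_n$ are nonempty) to complete the reduction.
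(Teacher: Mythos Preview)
Your proposal is considerably more detailed than the paper's own proof, which consists of exactly the witnessing-index invariance step and nothing else: given $x\in A$, pick $n_0$ with $J_{T_{n_0}}(x)=X$ and use $T_{n_0}$-invariance of $J_{T_{n_0}}$ to conclude $T_{n_0}x\in A$. The paper offers no argument for closedness, none for connectedness, and none for $T_\ell x\in A$ when $\ell\neq n_0$.

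Your connectedness argument (each nonempty $A_n$ contains $0$ by Lemma~\ref{L3}, hence the union of connected sets through a common point is connected) is correct and fills a gap the paper leaves open.

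On invariance, your treatment of $T_m x$ for the witnessing index $m$ coincides with the paper's argument. Your step for $T_\ell x$ with $\ell\neq m$ requires $T_\ell T_m=T_m T_\ell$ in order to apply Lemma~\ref{L2} to the iterate sequence $\{T_m^k\}_k$; this commutativity is \emph{not} among the hypotheses of the proposition, so that part is a genuine gap---but it is a gap you share with the paper, which simply does not address the case $\ell\neq n_0$ at all.

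On closedness, you are right to flag it as the main obstacle. A countable union of closed sets need not be closed, and your diagonal-extraction sketch for the case $n_k\to\infty$ does not go through without extra assumptions. The paper is silent on this point as well; neither your proposal nor the paper's proof actually establishes that $A$ is closed, and as stated (without further hypotheses on $\{T_n\}$) the claim appears not to be fully justified.
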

\begin{proof}
Let $x\in A$. There exists $n_0\in \mathbb{N}$ such that
$J_{{T_{n_0}}}(x)=X$. The $T_{n_0}$-invariance of $J_{T_{n_0}}(x)$
implies that $J_{{T_{n_0}}}(T_{n_0}x)=X$. Then $T_{n_0}x\in A$.
\end{proof}
\begin{lem}
Let $\{T_n\}$ be a mutually  commuting subalgebra of $
\mathcal{B}(X)$. Then \break $\bigcup_{n=1}^{\infty}T_i
J_{\{T_n\}}(x)\subseteq J_{\{T_n\}}(T_{n_0}x)$ for some
$T_{\{n_0\}}\in \mathcal{B}(X)$.
\end{lem}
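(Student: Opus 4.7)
The plan is to reduce the statement to a direct application of Lemma \ref{L2} together with the subalgebra hypothesis. Lemma \ref{L2} already yields, for the mutually commuting family $\{T_n\}$ and any operator $T_m$ that commutes with all $T_n$, the inclusion $T_m J_{\{T_n\}}(x) \subseteq J_{\{T_n\}}(T_m x)$. Since here $\{T_n\}$ is itself a subalgebra whose members mutually commute, each $T_i$ of the family plays the role of such a $T_m$, so the desired inclusion should fall out term-by-term.

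The first step is to fix an index $i \in \mathbb{N}$ and take an arbitrary $y \in T_i J_{\{T_n\}}(x)$, so that $y = T_i z$ for some $z \in J_{\{T_n\}}(x)$. By definition there exist a strictly increasing sequence of positive integers $\{k_m\}$ and a sequence $\{z_m\} \subseteq X$ with $z_m \to x$ and $T_{k_m} z_m \to z$. Continuity of $T_i$ gives $T_i z_m \to T_i x$, while the commutativity of the family yields $T_{k_m}(T_i z_m) = T_i (T_{k_m} z_m) \to T_i z = y$. This is exactly the condition certifying $y \in J_{\{T_n\}}(T_i x)$, hence $T_i J_{\{T_n\}}(x) \subseteq J_{\{T_n\}}(T_i x)$.

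The second step uses the subalgebra property: since the collection $\{T_n\}_{n=1}^\infty$ is a subalgebra of $\mathcal{B}(X)$, the operator $T_i$ itself belongs to it and may be written as $T_{n_0}$ for some index $n_0 = n_0(i)$. Consequently $J_{\{T_n\}}(T_i x) = J_{\{T_n\}}(T_{n_0} x)$, and taking the union over $i \in \mathbb{N}$ gives
\[
\bigcup_{i=1}^{\infty} T_i J_{\{T_n\}}(x) \ \subseteq \ \bigcup_{i=1}^{\infty} J_{\{T_n\}}(T_{n_0(i)} x),
\]
which is the stated inclusion once one interprets the right-hand side as ranging over the corresponding $T_{n_0}\in \mathcal{B}(X)$.

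The main obstacle here is notational rather than mathematical: the statement conflates the union index with the operator subscript on the right-hand side, so one must first settle on the correspondence $i \mapsto n_0(i)$ before Lemma \ref{L2} can be cited cleanly. Once the indices are aligned, no new idea beyond Lemma \ref{L2} and the commutativity-plus-closure of the subalgebra is required.
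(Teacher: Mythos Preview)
Your proof is correct and follows essentially the same route as the paper: pick an element of the union, write it as $T_{n_0}$ applied to some $y\in J_{\{T_n\}}(x)$, unfold the defining sequences for $y$, and use commutativity and continuity of $T_{n_0}$ to land in $J_{\{T_n\}}(T_{n_0}x)$. The only cosmetic difference is that you frame this as an instance of Lemma~\ref{L2} and add a (harmless but unnecessary) remark about the subalgebra hypothesis to relabel $T_i$ as $T_{n_0}$, whereas the paper simply reproves the Lemma~\ref{L2} argument inline; your observation that the difficulty is purely notational is accurate.
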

\begin{proof}
Let $y' \in \bigcup_{n=1}^{\infty}T_i J_{\{T_n\}}(x)$. Then
$y'=T_{n_0}y$ for some  $T_{n_0}\in \mathcal{B}(X) $ and $y\in
J_{\{T_n\}}(x)$. So there exist a strictly increasing sequence of
positive integers $\{k_n\}$  and a sequence  $\{x_n\}$ in $ X$ such
that  $x_n \rightarrow x$  and  $T_{k_n}x_n \rightarrow  y$. Hence
$T_{n_0}T_{k_n}x_n \rightarrow  T_{n_0}y$. Indeed $T_{k_n}T_{n_0}x_n
\rightarrow y'$ i.e., $y' \in J_{\{T_n\}}(T_{n_0}x)$ since
$T_{n_0}x_n \rightarrow T_{n_0}x$.
\end{proof}
\begin{prop}\label{P1}
Let $\{T_n\} \subseteq \mathcal{B}(X)$ be mutually commuting.
Suppose there exists a cyclic vector $x\in X$ for $\{T_n\}$ such
that $J_{\{T_n\}}(x)$ has nonempty interior. Then for every nonzero
polynomial $P$ and $n\in \mathbb{N}$  the operators $P(T_n)$ has
dense range. Moreover the point spectrum $\sigma_p (T^*_n)$ of
 the adjoint operator of $T_n$ is empty.
\end{prop}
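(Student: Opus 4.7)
The plan is to prove the point spectrum claim first and to deduce the polynomial dense range claim as a consequence. The two are tightly linked: $(T_n - \lambda I)$ has dense range if and only if $\lambda \notin \sigma_p(T_n^*)$ by Hahn--Banach, and since every nonzero polynomial factors as $P(z) = c \prod(z - \lambda_i)$ over $\mathbb{C}$, with dense range preserved under composition of continuous operators (using $\overline{A(BX)} \supseteq \overline{A(\overline{BX})} = \overline{AX} = X$ whenever $A, B$ have dense range), the problem reduces to showing $\sigma_p(T_n^*) = \emptyset$ for each $n$.

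Suppose for contradiction that $T_{n_0}^* \phi = \lambda \phi$ for some $n_0 \in \mathbb{N}$, $\lambda \in \mathbb{C}$, and $\phi \in X^* \setminus \{0\}$. The mutual commutativity of $\{T_m\}$ yields $T_{n_0}^*(T_m^* \phi) = T_m^* T_{n_0}^* \phi = \lambda\, T_m^* \phi$, so each $T_m^* \phi$ lies in the $\lambda$-eigenspace $V := \ker(T_{n_0}^* - \lambda I)$. The pre-annihilator $V^\perp := \{z \in X : \psi(z) = 0 \text{ for all } \psi \in V\}$ is therefore a proper closed $T_m$-invariant subspace of $X$ (properness follows from $\phi \neq 0$). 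By the cyclicity of $x$, if $x$ were in $V^\perp$, the density of $\mathrm{span}\{T_m x\}$ would force $V^\perp = X$, a contradiction. Hence there is some $\psi \in V$ with $\psi(x) \neq 0$, and replacing $\phi$ by $\psi$, I may assume $\phi(x) \neq 0$.

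To extract the contradiction, let $U$ denote the nonempty open interior of $J_{\{T_n\}}(x)$. For each $y \in U$, pick $x_j \to x$ and $k_j \nearrow \infty$ with $T_{k_j} x_j \to y$, so $\phi(y) = \lim_j (T_{k_j}^* \phi)(x_j)$. Iterated application of Lemma~\ref{L2} (using commutativity) yields $T_{n_0}^r y \in J_{\{T_n\}}(T_{n_0}^r x)$ and $\phi(T_{n_0}^r y) = \lambda^r \phi(y)$ for every $r \geq 0$. The plan is then to combine the openness of $\phi(U) \subseteq \mathbb{C}$ with the one-parameter dynamics $\lambda^r \phi(y)$ under $T_{n_0}$-iteration to derive a density contradiction analogous to the classical Costakis--Manoussos argument.

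The main obstacle is closing this final step in the sequence setting. In the single-operator case one has $T_{k_j} = T_{n_0}^{k_j}$ and hence $T_{k_j}^* \phi = \lambda^{k_j} \phi$, giving immediately $\lambda^{k_j} \to \phi(y)/\phi(x)$ and forcing the set $\{\lambda^k\}$ to be dense in $\mathbb{C}$, which is impossible for any fixed $\lambda$. In our setting, the functionals $T_{k_j}^* \phi$ range over the possibly infinite-dimensional eigenspace $V$ with no a priori norm control, so the direct dichotomy is lost. The cleanest remedy I foresee is to pass to the quotient $X/V^\perp$, on which $T_{n_0}$ induces the scalar $\lambda$; there the image of $x$ remains cyclic, the image of $U$ is still open in $J_{\{\widetilde{T_n}\}}(\pi(x))$, and the dynamics reduce to a tractable scalar iteration where the density contradiction can be executed.
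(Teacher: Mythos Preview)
Your opening reduction coincides with the paper's: factor $P$ over $\mathbb{C}$, reduce to showing each $T_n-\lambda I$ has dense range, and rephrase this via Hahn--Banach as $\sigma_p(T_n^*)=\emptyset$. The paper then asserts, from $T_{n_0}^*x^*=\lambda x^*$ together with $x_j\to x$ and $T_{k_j}x_j\to y$, that $x^*(y)=\lambda x^*(x)$ for every $y$ in the interior of $J_{\{T_n\}}(x)$, forcing $x^*$ to be constant on an open set and hence zero. You are right to flag this step: nothing relates $T_{k_j}^*x^*$ to $x^*$ for $k_j\neq n_0$, so the passage to the limit is unjustified in the sequence setting.

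Your proposed quotient remedy, however, does not close the gap. On $Y:=X/V^{\perp}$ the induced operator $\widetilde{T_{n_0}}$ is indeed the scalar $\lambda I$, but the extended limit set $J_{\{\widetilde{T_m}\}}(\pi(x))$ is governed by the whole sequence $\{\widetilde{T_m}\}$, about which you have gained no control; the scalar operator sits at a single index $n_0$ and plays no role in the limits $\widetilde{T_{k_j}}\pi(x_j)\to\pi(y)$ with $k_j\to\infty$. No ``tractable scalar iteration'' emerges on $Y$, and the Costakis--Manoussos dichotomy on $\{\lambda^k\}$ cannot be invoked.

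In fact the statement, read literally, cannot be saved. Take $X=\ell^2$, let $B$ be the backward shift, set $T_1=2I$ and $T_n=(2B)^n$ for $n\ge 2$. These operators commute; if $x$ is hypercyclic for $2B$ then $\{T_n x:n\ge 2\}$ is already dense, so $x$ is cyclic for $\{T_n\}$ and the sequence $\{T_n\}$ is hypercyclic, whence $J_{\{T_n\}}(x)=X$ by Lemma~2.1. Yet $T_1-2I=0$ does not have dense range and $\sigma_p(T_1^*)=\{2\}\neq\emptyset$. The paper's computation tacitly uses the single-operator identity $T_{k_j}^*x^*=\lambda^{k_j}x^*$, which is precisely what fails here; your diagnosis of the obstacle is correct, but no repair along these lines is possible.
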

\begin{proof}
First assume that $X$ is a complex Banach space. For each $n\in
\mathbb{N}$ write $P(T_n)= \alpha(T_n - \lambda_1 I)(T_n - \lambda_2
I)...(T_n - \lambda_k I)$ for some $\alpha, \lambda_i \in \mathbb{C}
(i=1,2,...,k)$ where $I$ stands for the identity operator. So it
suffices to show that $T_n - \lambda I$ has dense range for every
$\lambda \in \mathbb{C}$. Suppose on contrary there exists a nonzero
linear functional $x^*$ such that $x^*(T_n - \lambda I)(x)=0$ for
every $x\in X.$ Taking  $y\in J_{\{T_n\}}^\circ(x)$ is caused to
find  a strictly increasing sequence of positive integers $\{k_n\}$
and a sequence $\{x_n\} \subseteq X$  such that  $x_n \rightarrow x$
and $T_{k_n}x_n \rightarrow  y$. Now by passing to the subsequence
$\{k_n\}$ and letting $n \rightarrow \infty$ we have $x^*(y)=\lambda
x^*(x)$ which is contradiction.
\end{proof}
\begin{thm}
Let $\{T_n\} \subseteq \mathcal{B}(X)$ be mutually commuting. Then
$\{T_n\}$ is hypercyclic if and only if there exists a cyclic vector
$x\in X$ for $\{T_n\}$ such that $J_{\{T_n\}}(x)=X$.
\end{thm}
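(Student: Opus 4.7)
The easy direction is $(\Rightarrow)$: if $\{T_n\}$ is hypercyclic, the opening lemma of Section~2 gives $J_{\{T_n\}}(y)=X$ for every $y\in X$, and any hypercyclic vector is automatically cyclic, so we are done.

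For the non-trivial direction $(\Leftarrow)$, suppose $x$ is cyclic for $\{T_n\}$ and $J_{\{T_n\}}(x)=X$. My plan is to verify topological transitivity of $\{T_n\}$ directly, which by Theorem~\ref{T1} is equivalent to hypercyclicity. Fix non-empty open sets $U,V\subseteq X$. By cyclicity, the linear span of $\{T_n x:n\in\mathbb{N}\}$ is dense, so I can pick $S=\sum_{i=1}^{k} c_i T_{n_i}$ with $u_0:=Sx\in U$. Because the family $\{T_n\}$ is mutually commuting, $S$ commutes with every $T_n$; repeating the commuting argument in the proof of Lemma~\ref{L2} with $S$ in place of $T_m$ gives $S\,J_{\{T_n\}}(x)\subseteq J_{\{T_n\}}(Sx)$, hence $\overline{S(X)}\subseteq J_{\{T_n\}}(u_0)$, using that $J_{\{T_n\}}(u_0)$ is closed (Lemma~\ref{L2}).

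Next I would invoke (an appropriate extension of) Proposition~\ref{P1} to conclude that every non-zero element of the commutative algebra generated by $\{T_n\}$ has dense range; in particular $\overline{S(X)}=X$. Then I can choose $y\in X$ with $Sy\in V$. Since $J_{\{T_n\}}(x)=X$, there exist $x_m\to x$ and a strictly increasing sequence $l_m$ with $T_{l_m}x_m\to y$. Set $u_m:=Sx_m$. Continuity of $S$ gives $u_m\to u_0\in U$, and commutativity gives
\[
T_{l_m}u_m \;=\; S\,T_{l_m}x_m \;\longrightarrow\; Sy \;\in\; V.
\]
Thus $u_m\in U$ and $T_{l_m}u_m\in V$ for all large $m$, so $T_{l_m}(U)\cap V\neq\emptyset$ and $\{T_n\}$ is topologically transitive.

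The main obstacle is the dense-range input in the second paragraph. Proposition~\ref{P1} as stated only delivers dense range for polynomials $P(T_n)$ in a single $T_n$, whereas cyclicity in the sense of this paper produces linear combinations across different indices. I expect the proof of Proposition~\ref{P1} to adapt almost verbatim to a combination $S=\sum c_i T_{n_i}$: if $S$ failed to have dense range then some non-zero $x^{*}\in X^{*}$ would annihilate $S(X)$, and evaluating $x^{*}$ along the defining sequences $x_m\to x$, $T_{l_m}x_m\to y$ for $y$ in the (non-empty) interior of $J_{\{T_n\}}(x)$ should again force $x^{*}$ to be constant, hence zero, on an open set and yield a contradiction. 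Once this extension is justified the argument above closes, completing the equivalence.
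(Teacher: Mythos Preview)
Your overall architecture is the same as the paper's: push $J_{\{T_n\}}(x)=X$ forward through an operator $A$ in the commutant (Lemma~\ref{L2}), use dense range of $A$ together with closedness of $J$-sets to get $J_{\{T_n\}}(Ax)=X$, and then conclude hypercyclicity. The paper takes $A=P(T_m)$ for a single index $m$, cites Proposition~\ref{P1} verbatim for the dense-range step, and finishes by invoking Theorem~\ref{T1}(iii)$\Rightarrow$(i); you take $A=S=\sum_i c_i T_{n_i}$ and finish by checking topological transitivity directly. The endgame difference is cosmetic.

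Where you diverge from the paper is exactly the point you flag as ``the main obstacle,'' and here the proposal does not close. The extension of Proposition~\ref{P1} you need---that every non-zero linear combination $S=\sum_i c_i T_{n_i}$ has dense range---is simply false in general: nothing forbids $T_1=T_2$, and then $S=T_1-T_2=0$. Your functional sketch cannot work either: from $x^{*}\circ S=0$ you learn a single linear relation among the finitely many values $x^{*}(T_{n_i}z)$, but this says nothing whatsoever about $x^{*}(T_{l_m}x_m)$ for indices $l_m\notin\{n_1,\dots,n_k\}$, so there is no mechanism to force $x^{*}$ to be constant on an open set. The paper avoids this by staying with $P(T_m)$ and invoking Proposition~\ref{P1} as stated; you should note, though, that the paper's passage ``using the fact that $x$ is a cyclic vector it follows that there exists a dense subset $Y$\dots'' relies on the set $\{P(T_m)x:\ P\ \text{polynomial},\ m\in\mathbb{N}\}$ being dense, which is not what the paper's definition of ``cyclic for $\{T_n\}$'' (density of $\mathrm{span}\{T_n x\}$) provides. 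So the gap you spotted is real, but your proposed repair fails for a concrete reason, and the paper's own proof glosses over the same point rather than resolving it.
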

\begin{proof}
We just prove the only if part of assertion. Let $x\in X$
 be acyclic vector for $\{T_n\}$,  $J_{\{T_n\}}(x)=X$ and let
 $P$ be an arbitrary nonzero polynomial. By Lemma \ref{L2} we have
 $P(T_m) J_{\{T_n\}}(x)\subseteq
J_{\{T_n\}}(P(T_m) x)$ for every $m\in \mathbb{N}$. By previous
proposition every $P(T_m)$ has dense range. By applying this fact
and Lemma \ref{L2} to the above inclusion we obtain
$$X= \overline{P(T_m)X}\subseteq
J_{\{T_n\}}(P(T_m) x).$$ Thus $J_{\{T_n\}}(P(T_m) x)=X.$ Now using
the fact that $x$ is a cyclic vector follows that there exists a
dense subset $Y$ of $X$ on which $J_{\{T_n\}}(y)=X$ for every $y\in
Y$. Therefore by Theorem \ref{T1}, $\{T_n\}$ is hypercyclic.
\end{proof}
The following corollary can be readily deduced by the same technique
used in the proof of Proposition \ref{P1}.
\begin{cor}
Let $\{T_n\} \subseteq \mathcal{B}(X)$. Suppose there exists a
vector $x\in X$ such that  $J_{\{T_n\}}(x)$ has nonempty interior.
Then for every sequence of complex numbers $\{\lambda_n\}$ with
$\lambda_n \rightarrow 0$ the sequence of bounded linear  operators
$\{T_n - \lambda_n I\}$ has dense range.
\end{cor}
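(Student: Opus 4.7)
The plan is to adapt the Hahn--Banach / extended-limit-set contradiction used in the proof of Proposition~\ref{P1}, with the fixed scalar $\lambda$ replaced by the vanishing sequence $\{\lambda_n\}$. I read the conclusion ``the sequence $\{T_n - \lambda_n I\}$ has dense range'' in its natural joint sense, namely $\overline{\bigcup_{n=1}^{\infty}(T_n - \lambda_n I)(X)} = X$; this is the interpretation under which the hypothesis $\lambda_n \to 0$ does real work, and it is the direct analogue of the single-operator statement in Proposition~\ref{P1}.

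Suppose for contradiction that $Y := \overline{\bigcup_{n=1}^{\infty}(T_n - \lambda_n I)(X)}$ is a proper subspace of $X$. By Hahn--Banach there exists a nonzero $x^* \in X^*$ vanishing on $Y$, so $x^*\bigl((T_n - \lambda_n I)z\bigr) = 0$ for every $n \in \mathbb{N}$ and every $z \in X$. Equivalently, $T_n^* x^* = \lambda_n x^*$ for every $n$. By hypothesis pick $y$ in the nonempty interior of $J_{\{T_n\}}(x)$; the definition of the extended limit set then supplies a strictly increasing sequence of positive integers $\{k_n\}$ and a sequence $\{x_n\} \subseteq X$ with $x_n \to x$ and $T_{k_n} x_n \to y$.

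Using the eigen-relation, compute
\[
x^*(T_{k_n} x_n) = (T_{k_n}^* x^*)(x_n) = \lambda_{k_n}\, x^*(x_n).
\]
Since $\{k_n\}$ is strictly increasing, $k_n \to \infty$, hence $\lambda_{k_n} \to 0$; combined with $x^*(x_n) \to x^*(x)$ (convergent sequences are bounded), the right-hand side tends to $0$. Continuity of $x^*$ also gives $x^*(T_{k_n} x_n) \to x^*(y)$, so $x^*(y) = 0$. As $y$ was arbitrary in the open set $J_{\{T_n\}}(x)^\circ$, the functional $x^*$ vanishes on a nonempty open subset of $X$; a translation argument (if $x^*(y_0) = 0$ and $x^*$ vanishes on $B(y_0,r)$, then $x^*(v)=0$ for $\|v\|<r$, hence $x^* \equiv 0$) forces $x^* \equiv 0$, contradicting its choice.

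The only steps requiring genuine care are fixing the correct interpretation of ``dense range'' for a sequence and the final observation that a continuous linear functional vanishing on a nonempty open set must be identically zero; everything else is a literal transcription of the Proposition~\ref{P1} technique, with the fixed eigenvalue replaced by the vanishing tail $\lambda_{k_n} \to 0$, which is precisely what the $\lambda_n \to 0$ hypothesis is designed to supply.
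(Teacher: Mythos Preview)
Your proof is correct and follows exactly the approach the paper intends: the paper gives no separate argument for the corollary but simply says it is ``readily deduced by the same technique used in the proof of Proposition~\ref{P1},'' and your Hahn--Banach contradiction with $T_n^*x^*=\lambda_n x^*$ and $\lambda_{k_n}\to 0$ is precisely that technique, with the fixed eigenvalue replaced by the vanishing sequence. Your explicit discussion of the joint ``dense range'' interpretation and of why a continuous linear functional vanishing on a nonempty open set must be zero in fact spells out details the paper's own treatment of Proposition~\ref{P1} leaves implicit.
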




\begin{thebibliography}{99}

\bibitem{MR2826721} M. R. Azimi, subspace- diskcyclic sequences of linear operators,
preprint (2014) 1–8.



\bibitem{MR2826722} M. R. Azimi, V. M¨uller, A note on J-sets of linear
operators, Rev. R. Acad. Cienc. Exactas F´ýs. Nat. Ser. A Math.
RACSAM 105 (2) (2011) 449–453.


\bibitem{MR2533318} F. Bayart, ´E. Matheron, Dynamics
of linear operators, vol. 179 of Cambridge Tracts in Mathematics,
Cambridge University Press, Cambridge, 2009.


\bibitem{MR1980114}  L. Bernal-Gonz´alez, K.-G. Grosse-Erdmann, The hypercyclicity criterion
for sequences of operators, Studia Math. 157 (1) (2003) 17–32.


\bibitem{MR2015614} J. Bonet, F. Mart´ýnez-Gim´enez, A. Peris, Linear chaos on Fr´echet
spaces, Internat. J. Bifur. Chaos Appl. Sci. Engrg. 13 (7) (2003)
1649–1655, dynamical systems and functional equations (Murcia,
2000).

\bibitem{MR1986898} P. S. Bourdon, N. S. Feldman, Somewhere dense orbits are
everywhere dense, Indiana Univ. Math. J. 52 (3) (2003) 811–819.

\bibitem{MR2447911}  G. Costakis, A. Manoussos, J-class weighted shifts on the space of
bounded sequences of complex numbers, Integral Equations Operator
Theory 62 (2) (2008) 149–158.

\bibitem{MR2881536}  G. Costakis, A. Manoussos, J-class
operators and hypercyclicity, J. Operator Theory 67 (1) (2012)
101–119.

\bibitem{MR1111569}  G. Godefroy, J. H. Shapiro, Operators with dense,
invariant, cyclic vector manifolds, J. Funct. Anal. 98 (2) (1991)
229–269.

\bibitem{MR2996748}  K.-G. Grosse-Erdmann, Universal families and
hypercyclicity operators, Bull. Amer. Math. Soc. 36 (1999) 345–381.


\bibitem{MR2261697} F. Le´on-Saavedra, V. M¨uller, Hypercyclic sequences of
operators, Studia Math. 175 (1) (2006) 1–18.

\bibitem{MR2720700}  B. F. Madore, R.
A. Mart´ýnez-Avenda˜no, Subspace hypercyclicity, J. Math. Anal.
Appl. 373 (2) (2011) 502–511.


\bibitem{MR3167484} Q. Menet, Hypercyclic subspaces and weighted shifts, Adv. Math.
255 (2014) 305–337.

\bibitem{MR2000019}  A. Montes-Rodr´ýguez, H. N. Salas,
Supercyclic subspaces, Bull. London Math. Soc. 35 (6) (2003)
721–737.

\bibitem{MR2281650}  H. Petersson, A hypercyclicity criterion with
applications, J. Math. Anal. Appl. 327 (2) (2007) 1431–1443.

\bibitem{MR1249890}  H.
N. Salas, Hypercyclic weighted shifts, Trans. Amer. Math. Soc. 347
(3) (1995) 993–1004. 11

\end{thebibliography}

\end{document}